\newcommand{\eps}{\varepsilon}
\newcommand{\To}{\rightarrow}
\newcommand{\N}{{\mathbb{N}}}
\newcommand{\R}{{\mathbb{R}}}
\newcommand{\C}{{\mathbb{C}}}
\newcommand{\F}{{\mathbb{F}}}
\newcommand{\I}{{\mathbb{I}}}
\newcommand{\abs}[1]{\left\vert #1\right\vert}
\newcommand{\norm}[1]{\left\Vert #1\right\Vert}
\DeclareMathOperator{\RE}{Re}
\DeclareMathOperator{\im}{im}
\DeclareMathOperator{\Gl}{Gl}
\DeclareMathOperator{\spa}{span}
\DeclareMathOperator{\normalrank}{normalrank}
\DeclareMathOperator{\condest}{condest}
\newcommand {\cV}       {{\cal V}}
\title{The SDA Method for Numerical Solution of Lur'e Equations}
\author{Federico Poloni\footnotemark[1]\and Timo Reis\footnotemark[2]}
\begin{document}

\maketitle
\renewcommand{\thefootnote}{\fnsymbol{footnote}}
\footnotetext[1]{Scuola Normale Superiore, piazza dei Cavalieri 7,
56126 Pisa, Italy ({\tt f.poloni@sns.it}).}
\footnotetext[2]{Institut f\"ur Numerische Simulation, Technische
Universit\"at Hamburg-Harburg, Schwarzenbergstra{\ss}e 95 E, 21073 Hamburg,
Germany ({\tt timo.reis@tu-harburg.de}). Supported by the DFG Research Center
``{\sc Matheon} - Mathematics for Key Technologies'' in Berlin.}
\renewcommand{\thefootnote}{\arabic{footnote}}

\begin{abstract}
We introduce a~numerical method for the numerical solution of the so-called Lur'e matrix equations that arise in balancing-related model reduction and linear-quadratic infinite time horizon optimal control.
Based on the fact that the set of solutions can be characterized in terms of deflating subspaces of even matrix pencils, an~iterative scheme is derived that converges linearly to the maximal solution.
\end{abstract}

\begin{keywords}
Lur'e equations, deflating subspaces, even matrix pencils, sign function, disc function, structured doubling algorithm
\end{keywords}

%\begin{AMS}
%\end{AMS}

\pagestyle{myheadings}
\thispagestyle{plain}
\markboth{F. POLONI AND T. REIS}{LUR'E EQUATIONS}

\section{Introduction}
For given matrices $A,Q\in\C^{n,n}$ with $Q=Q^*$ and $B,C\in\C^{n,m}$, $R\in\C^{m,m}$, we consider {\em Lur'e equations}
\begin{equation}
\begin{aligned}
A^*X+XA+Q&=K^*K,\\
XB+C&=K^*L,\\
R&=L^*L,
\end{aligned}\label{eq:lure}
\end{equation}
that have to be solved for $(X,K,L)\in\C^{n,n}\times\C^{p,n}\times\C^{p,m}$ with $X=X^*$ and $p$ as small as possible. Equations of type (\ref{eq:lure}) were first introduced by {\sc A.I.~Lur'e} \cite{Lur51} in 1951 (see \cite{Bar07a} for an historical overview) and play a fundamental role in systems theory, e.g.\ since properties like dissipativity of linear systems can be characterized via their solvability \cite{Ande66,AndNew68,AndeV73,Wil72b}. This type of equations moreover appears in the infinite time horizon linear-quadratic optimal control problem \cite{CleAnd77a,CleAnd77b,CleAnd78,Yak85,ZhoDoyGlo96}, spectral factorization \cite{CleGlo89,CALM97} as well as in balancing-related model reduction \cite{ChenWen95,GugeA04,OpdJon88,PhilDS03,ReiSty08}. In the case where $R$ is invertible, the matrices $K$ and $L$ can be eliminated by obtaining the algebraic Riccati equation
\begin{equation}
A^*X+XA-(XB+C)R^{-1}(XB+C)^*+Q=0.
\label{eq:are}
\end{equation}
Whereas this type is well-explored both from an~analytical and numerical point of view \cite{Lanc95,Wil71,Ben97}, the case of singular $R$ has received comparatively little attention. However, the singularity of $R$ is often a~structural property of the system to be analyzed \cite{ReiSty08b} and can therefore not be avoided by arguments of genericity.

Two approaches exist for the numerical solution of Lur'e equations with (possibly) singular $R$. 
The works \cite{JacSp71b,WeWaSp94} present an~iterative technique for the elimination of variables corresponding to $\ker R$. After a finite number of steps this leads to a Riccati equation. This also gives an equivalent solvability criterion that is obtained by the feasibility of this iteration.
The most common approach to the solution of Lur'e equations is the slight perturbation of $R$ by $\eps I_m$ for some $\eps>0$. Then by using the invertibility of $R+\eps I$, the corresponding perturbed Lur'e equations are now equivalent to the Riccati equation
\begin{equation}
A^*X_\eps+X_\eps A-(XB+C)(R+\eps I)^{-1}(X_\eps B+C)^*+Q=0.
\label{eq:are_pert}
\end{equation}
It is shown in \cite{JacSp71a,Tre87} that certain corresponding solutions $X_\eps$ then converge to a solution of (\ref{eq:lure}).\\
Whereas the first approach has the great disadvantage that it relies on successive nullspace computations (which may be arbitrarily an ill-conditioned numerical problem), the big problem of the perturbation approach is that there exist no estimates for the perturbation error $\|X-X_\eps\|$ and, furthermore, the numerical condition of the Riccati equation (\ref{eq:are_pert}) increases drastically as $\eps$ tends to $0$.\\
From a~theoretical point of view, Lur'e equations have been investigated in \cite{CALM97,Bar07a}. The solution set is completely characterized in \cite{Rei09} via the consideration of the matrix pencil
\begin{equation}
s\mathcal{E}-\mathcal{A}=\begin{bmatrix}0&-sI+A&B\\sI+A^*&Q&C\\B^*&C^*&R\end{bmatrix}.\label{eq:evmatpen}
\end{equation}
This pencil has the special property of being {\em even}, that is $\mathcal{E}$ is skew-Hermitian and $\mathcal{A}$ is Hermitian.
Solvability of (\ref{eq:lure}) is characterized via the eigenstructure of this pencil. It is furthermore shown that there exists some correspondence to {\em deflating subspaces} of (\ref{eq:evmatpen}). That is, a generalization of the concept of invariant subspaces to matrix pencils\cite{Gant59}. Under some slight additional conditions of the pair $(A,B)$, it is shown in \cite{Rei09} that there exists a~so-called {\em maximal solution} $X$. In this case, maximal means that $X$ is, in terms of semi-definiteness, above all other solutions of the Lur'e equations. This solution is of particular interest in optimal control as well as in model reduction.

Based on these theoretical results of \cite{Rei09}, we will set up an iterative scheme that converges linearly to the maximal solution.

The paper is organized as follows. Section~\ref{sec:prelim} introduces the notation and contains
some required control and matrix theoretic background, in particular a normal form for even matrix pencils is introduced.
In Section \ref{sec:solv} we briefly present some results from \cite{Rei09} which connect the spectral properties of the even pencil
(\ref{eq:evmatpen}) to the solvability and the solutions of the Lur'e equations. An outline of the structured doubling algorithm for the solution of control problems is presented in Section~\ref{sec:SDA}. Section~\ref{sec:ssfilemma} contains a method for transforming a matrix pencil in the form required by the SDA. In Section~\ref{sec:reduction}, we apply this method to the singular control problem to obtain a reduced discrete-time pencil associated to a Lur'e equation. Section~\ref{sec:details} deals with the details of implementing SDA for this problem, and in particular with the choice of the parameter $\gamma$ in the Cayley transform. We present several numerical experiments to illustrate the benefits of this approach in Section~\ref{sec:experiments}, and presents our conclusions in Section~\ref{sec:conc}.

\section{Control and Matrix Theoretic Preliminaries}\label{sec:prelim}
Throughout the paper real and complex numbers are denoted by $\R$ and $\C$, the open left and right half-planes by $\C^-$ and $\C^+$,
respectively. The symbol $i$ stands for the imaginary unit, $i\R$ denotes the imaginary axis and by $\overline{z}$ we denote the conjugate transpose of $z\in\C$.
Natural numbers excluding and including $0$ are denoted by $\N$ and $\N_0$, respectively. The spaces of $n\times m$ complex matrices are denoted by
$\C^{n,m}$, and the set of invertible and complex $n\times n$ matrices by $\Gl_n(\C)$.
The matrices $A^T$ and $A^*$ denote, respectively, the transpose and the conjugate transpose of
$A\in\C^{n,m}$, and $A^{-T}=(A^{-1})^T$, $A^{-*}=(A^{-1})^*$. We denote by
$\rank(A)$ the rank, by $\im A$ the image, by $\ker A$ the
kernel, by $\sigma(A)$ the spectrum of a~matrix $A$.
For Hermitian matrices $P,Q\in\C^{n,n}$, we write $P>Q$
($P\geq Q$) if $P-Q$ is positive (semi-)definite.\\
For a rational matrix-valued function $\Phi:\C\backslash D\To\C^{n,m}$, where $D\subset\C$ is the finite set of poles, we define the normal rank by
$\normalrank\Phi=\max_{s\in\C\backslash D}\rank\Phi(s)$.\\
With
$A_i\in\C^{n_i,m_i}$ with $m_i,n_i\in\N_0$ for $i=1,\ldots,k$, we denote the block diagonal matrix by %with block diagonal matrix elements $A_1,\ldots,A_k$ by
$\diag(A_1,\ldots, A_k)$.
An~identity matrix of
order $n$ is denoted by $I_n$ or simply by $I$. The zero $n\times
m$ ($n\times n$) matrix is denoted by $0_{n,m}$ (resp.\ $0_{n}$) or simply by $0$. Moreover, for $k\in\N$ we introduce the following special matrices $J_k,M_k,N_k\in\R^{k,k}$, $K_k,L_k\in\R^{k-1,k}$ with
\[
\begin{aligned}
J_k&=\begin{bmatrix}&&1\\&\iddots&\\1&&\end{bmatrix},&
K_k&=\begin{bmatrix}0&1&\\&\ddots&\ddots&\\&&0&1\end{bmatrix},&
L_k&=\begin{bmatrix}1&0&\\&\ddots&\ddots&\\&&1&0\end{bmatrix},\\
M_k&=\begin{bmatrix}&&1&0\\&\iddots&\iddots&\\1&\iddots&&\\0&&&\end{bmatrix},&
N_k&=\begin{bmatrix}0&1&\\&\ddots&\ddots&\\&&\ddots&1\\&&&0\end{bmatrix}.
\end{aligned}
\]
\begin{definition}
Let $s E-A$ be a matrix pencil with $E,A\in\R^{m,n}$. Then
 $s E-A$ is called \emph{regular} if $m=n$ and
$\normalrank (s E-A)=n$.
%If $s E-A$ is not regular, then it is said to be \emph{singular}.

A pencil $s E-A$ is called \emph{even} if
$E=-E^*$ and $A=A^*$. A pencil with $E,A \in \R^{2n,2n}$ is called \emph{symplectic} if $EJE^T=AJA^T$, where
\[
J=\begin{bmatrix}0 & I_n\\-I_n & 0\end{bmatrix}.
\]
\end{definition}
Many properties of a matrix pencil can be
characterized in terms of the \emph{Kronecker canonical form (KCF)}.

%\begin{minipage}[c]{8cm}
\renewcommand{\arraystretch}{1.5}
\begin{table}[!h]
\begin{center}
\begin{tabular}{|l|l|l|l|}
\hline Type&Size&$\mathcal{C}_j(s)$&Parameters\\
\hline\hline
 W1&$k_j\times k_j$ & $(s-\lambda) I_{k_j}-N_{k_j}$ & $k_j\in\N$, $\lambda\in\C$ \\\hline
 W2&$k_j\times k_j$ & $sN_{k_j}-I_{k_j}$ & $k_j\in\N$  \\\hline
 W3&$(k_j-1)\times k_j$ & $sK_{k_j}-L_{k_j}$ & $k_j\in\N$  \\\hline
 W4&$k_j\times (k_j-1)$ & $sK^T_{k_j}-L^T_{k_j}$ & $k_j\in\N$  \\\hline
\end{tabular}~\\~\\\caption{Block types in Kronecker canonical form}\label{tab:wcf}\end{center}
\end{table}
%\end{minipage}

\begin{theorem}\cite{Gant59}\label{thm:wcf}
For a matrix pencil $sE-A$ with $E,A\in\C^{n,m}$, there exist matrices
$U_l\in\Gl_n(\C)$, $U_r\in\Gl_m(\C)$, such that
\begin{equation}U_l(s E-A)U_r=\diag(\mathcal{C}_1(s),\ldots,\mathcal{C}_k(s)), \label{eq:kronform}
\end{equation}
where each of the pencils $\mathcal{C}_j(s)$ is of one of the types presented in Table \ref{tab:wcf}.
~\\
The numbers $\lambda$ appearing in the blocks of type W1 are called the {\em (generalized) eigenvalues} of $sE-A$. Blocks of type W2 are said to be corresponding to infinite eigenvalues.\\
\end{theorem}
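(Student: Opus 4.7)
The plan is to reduce the general rectangular pencil to the regular (square, nondegenerate) case, and then decompose the regular part into finite and infinite eigenvalue components.

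First, I would peel off the singular blocks \mbox{W3} and \mbox{W4}. If $sE-A$ is not regular, then either there exists a nonzero polynomial vector $x(s)\in\C[s]^m$ with $(sE-A)x(s)=0$, or the analogous statement holds for $(sE-A)^T$. Suppose the former, and choose $x(s)=x_0+sx_1+\cdots+s^{k-1}x_{k-1}$ of minimal degree $k-1$. Comparing powers of $s$ in $(sE-A)x(s)=0$ yields the Kronecker chain $Ax_0=0$, $Ax_{j+1}=Ex_j$ for $j=0,\ldots,k-2$, and $Ex_{k-1}=0$. The minimality of $k$ forces $x_0,\ldots,x_{k-1}$ to be linearly independent and also $Ax_1,\ldots,Ax_{k-1}$ to be linearly independent. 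Completing each family to a basis of $\C^m$ and $\C^n$ gives transformation matrices $U_r$ and $U_l^{-1}$ under which $sE-A$ acquires a leading block of the form $sK_k-L_k$, decoupled from the rest. Repeating on the residual pencil exhausts all W3 blocks; applying the same argument to the transposed pencil peels off the W4 blocks.

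Second, after the singular blocks are removed, the remaining pencil $s\tilde E-\tilde A$ is regular, hence square. I would apply the Weierstrass decomposition: pick $\mu\in\C$ with $\mu \tilde E-\tilde A$ invertible (possible because the pencil is regular), multiply on the left by $(\mu \tilde E-\tilde A)^{-1}$, and split the resulting square matrix $\hat E=(\mu\tilde E-\tilde A)^{-1}\tilde E$ according to its nonzero and zero eigenvalues via a standard invariant-subspace/Riesz projector argument. This produces a block decomposition equivalent to $\diag(sI-J,\,sN-I)$ with $N$ nilpotent and $J$ containing exactly the finite eigenvalues of the original pencil.

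Third, I would put $J$ in Jordan canonical form; the resulting Jordan blocks give precisely the \mbox{W1} blocks $(s-\lambda)I_{k_j}-N_{k_j}$. Similarly, bringing the nilpotent $N$ to its Jordan form yields \mbox{W2} blocks $sN_{k_j}-I_{k_j}$ after trivial reindexing. Combining the three reductions yields the claimed block-diagonal form.

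The main obstacle is the first step: the careful bookkeeping needed to ensure that the minimal-degree polynomial solution produces exactly the W3 structure, that the chain vectors and their images are linearly independent, and that the inductive extraction truly decouples the block. The regular case, by contrast, follows essentially from standard Jordan theory once the resolvent splitting has been performed.
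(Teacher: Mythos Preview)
The paper does not prove this theorem; it is stated with a citation to Gantmacher \cite{Gant59} and used purely as background. There is therefore no proof in the paper to compare your proposal against.

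For what it is worth, your outline is the classical one and is essentially Gantmacher's own route: strip off the singular blocks via minimal-degree polynomial solutions of $(sE-A)x(s)=0$ and of the transposed equation, then apply the Weierstrass splitting to the remaining regular core, and finish with Jordan normal forms on the finite and nilpotent parts. The sketch is sound. The technical weight, as you correctly identify, sits in the first step. Two points deserve care there: (i) the linear independence of the chain $x_0,\ldots,x_{k-1}$ and of $Ex_0,\ldots,Ex_{k-2}$ both rely on minimality of $k$ --- a nontrivial dependence lets one manufacture a strictly shorter polynomial kernel vector; (ii) the decoupling is not automatic once complementary bases are chosen: one must show that the off-diagonal coupling block can be eliminated, and minimality is invoked a second time here (any surviving coupling would again yield a shorter polynomial solution). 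Once those two uses of minimality are made explicit, the induction goes through and the regular part is standard.
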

~\\
A~special modification of the KCF for even matrix pencils, the so-called {\em even Kronecker canonical form (EKCF)} is presented in\cite{Tho76}.
\renewcommand{\arraystretch}{1.5}
\begin{table}[h!]\begin{center}
\begin{tabular}{|l|l|l|l|}
\hline Type&Size&$\mathcal{D}_j(s)$&Parameters\\
\hline\hline&&&\\[-0.4cm]
 E1&$2k_j\times 2k_j$ & {\renewcommand{\arraystretch}{1}$\begin{bmatrix}0_{k_j,k_j}&(\lambda\!-\!s)I_{k_j}\!\!-\!N_{k_j}\\(\overline{\lambda}\!+\!s)I_{k_j}\!\!-\!N_{k_j}^T&0_{k_j,k_j}\end{bmatrix}$} &\renewcommand{\arraystretch}{1.5}$k_j\in\N$, $\lambda\in\C^+$ \\&&&\\[-0.5cm]\hline
 E2&$k_j\times k_j$&$\epsilon_j((-is-\mu)J_{k_j}+M_{k_j})$ &{\renewcommand{\arraystretch}{1}$\!\!\!\!\begin{array}{l}\\[-0.3cm]k_j\in\N, \mu\in\R,\\\epsilon_j\in\{-1,1\}\\[-0.35cm] \phantom{x}\end{array}$} \\\hline
 E3&$k_j\times k_j$ & $\epsilon_j (isM_{k_j}+J_{k_j})$ &{\renewcommand{\arraystretch}{1}$\!\!\!\!\begin{array}{l}\\[-0.3cm]k_j\in\N,\\\epsilon_j\in\{-1,1\}\\[-0.35cm] \phantom{x}\end{array}$}\\\hline&&&\\[-0.4cm]
 E4&{\renewcommand{\arraystretch}{1}$\!\!\!\begin{array}{l}(2k_j\!-\!1)\times\\(2k_j\!-\!1)\end{array}$} & {\renewcommand{\arraystretch}{1}$\begin{bmatrix}0_{k_j-1,k_j-1}&-sK_{k_j}+L_{k_j}\\sK_{k_j}^T+L_{k_j}^T&0_{k_j,k_j}\end{bmatrix}$}
&$k_j\in\N$\renewcommand{\arraystretch}{1.5}\\[0.4cm]\hline
\end{tabular}~\\~\\\caption{Block types in even Kronecker canonical form}\label{tab:ewcf}\end{center}
\end{table}
\renewcommand{\arraystretch}{1}

\begin{theorem}\cite{Tho76}\label{thm:ewcf}
For an even matrix pencil $sE-A$ with $E,A\in\C^{n,n}$, there exists a matrix
$U\in\Gl_n(\C)$ such that
\begin{equation}U^*(s E-A)U=\diag(\mathcal{D}_1(s),\ldots,\mathcal{D}_k(s)), \label{eq:kronform_ev}
\end{equation}
where each of the pencils $\mathcal{D}_j(s)$ is of one of the types presented in Table~\ref{tab:ewcf}.
~\\
The numbers $\eps_j$ in the blocks of type E2 and E3 are called the {\em block signatures}.\\
\end{theorem}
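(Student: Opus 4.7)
The plan is to deduce the EKCF from the ordinary Kronecker canonical form (Theorem~\ref{thm:wcf}) by exploiting the even structure. Starting from $U_l(sE-A)U_r=\diag(\mathcal{C}_1(s),\ldots,\mathcal{C}_k(s))$ in KCF, the key observation is that evenness gives $(sE-A)^* = -(\bar{s}E+A)$; equivalently, the pencil is invariant, up to the substitution $s\mapsto -\bar{s}$, under conjugate transposition. By uniqueness of the KCF up to permutation of blocks, the multiset of Kronecker blocks must therefore be stable under this operation. Concretely: every W1 block with eigenvalue $\lambda\in\C\setminus i\R$ is paired with a W1 block of the same size with eigenvalue $-\bar{\lambda}$; every W3 block of size $(k_j-1)\times k_j$ is paired with a W4 block of size $k_j\times(k_j-1)$; and W1 blocks with purely imaginary eigenvalue, together with all W2 blocks (infinite eigenvalue), are self-paired.

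For each (W1,W1) pair with eigenvalues $\lambda,-\bar{\lambda}$, $\lambda\in\C^+$, I would construct an explicit congruence $V^*(\cdot)V$ combining the two Jordan blocks into a single $2k_j\times 2k_j$ antidiagonal block of type E1. Similarly, for each (W3,W4) pair a direct change of basis produces one E4 block. These two constructions are essentially algebraic: one writes the two-block pencil in a basis adapted to its congruence class and checks that the target E-block drops out, using only the fact that the left and right transformations in the KCF are now forced to be adjoints of each other.

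The genuinely delicate step is the self-paired case, where a W1 block at $\lambda\in i\R$ or a W2 block becomes an E2 or E3 block carrying a sign $\epsilon_j\in\{\pm 1\}$. Here congruence (rather than full equivalence) is the correct notion and the sign cannot be normalized away: on the generalized eigenspace corresponding to a single Jordan structure the even pencil induces a nondegenerate Hermitian sesquilinear form, and $\epsilon_j$ is precisely the signature invariant of this form. The proof thus reduces to classifying Hermitian forms on a cyclic space for a single nilpotent (respectively $\lambda I+N$) endomorphism and showing that inside each Jordan block one can rescale the basis so that the form takes the normalized $\pm J_{k_j}$ or $\pm M_{k_j}$ shape appearing in E2 and E3. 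This local normal-form lemma is the main technical obstacle; once it is in hand, assembling the per-block congruences into a single $U\in\Gl_n(\C)$ with $U^*(sE-A)U$ in EKCF form is routine.
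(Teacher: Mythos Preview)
The paper does not prove this theorem at all; it is quoted verbatim as a result of Thompson \cite{Tho76} and no argument is supplied. There is therefore nothing in the paper to compare your proposal against.

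That said, a brief comment on the sketch itself. The strategy you outline---pair up KCF blocks under the involution $s\mapsto -\bar s$ combined with conjugate transposition, then normalize each pair (or self-paired block) by an explicit congruence---is indeed the standard route to structured canonical forms of this type, and the identification of the sign $\epsilon_j$ as a signature invariant of the induced Hermitian form on a cyclic subspace is exactly right. The one step you pass over too quickly is the reduction to ``local'' problems: knowing that the multiset of Kronecker blocks is stable under the involution tells you only what the target block list must be; it does not by itself produce a single $U$ with $U^*(sE-A)U$ block-diagonal. For that you need an inductive splitting argument: isolate one block (or one paired couple of blocks) together with the subspace on which the even pencil restricts nondegenerately, show that its complement with respect to the Hermitian form induced by $A$ (or $E$) is again an even pencil of smaller size, and recurse. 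Without this nondegeneracy/complement step, the phrase ``assembling the per-block congruences into a single $U$'' hides the actual work, since congruences on overlapping subspaces do not automatically glue. Once that inductive frame is in place, your per-block normalizations are the correct local ingredients.
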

The blocks of type E1 contain pairs pairs $(\lambda,-\overline{\lambda})$ of generalized eigenvalues. The blocks of type E2 and E3 respectively correspond to the purely imaginary and infinite eigenvalues. Blocks of type E4 consist of a combination of blocks that are equivalent to those of type W3 and W4.\\

\begin{definition}\label{alg_struct}
A subspace $\cV\subset\C^N$ is called \emph{(right) deflating subspace}
for the pencil $sE-A$ with $E,A\in\C^{M,N}$ if for a matrix $V\in\C^{N,k}$ with
full column rank and $\im V=\cV$,
there exists an $l\leq k$ and matrices $W\in\C^{M,l}$, $\widetilde{E},\widetilde{A}\in\C^{l,k}$ with
\begin{equation}
(sE-A)V=W(s\widetilde{E}-\widetilde{A}),\label{eq:defl_pencil_form}
\end{equation}
\end{definition}
\begin{definition}
An eigenvalue $\lambda$ of a matrix pencil is called \emph{c-stable}, \emph{c-critical} or \emph{c-unstable} respectively if $\RE(\lambda)$ is smaller than, equal to, or greater than 0. A right deflating subspace is called \emph{c-stable} (resp. \emph{c-unstable}) if it contains only c-stable (resp. c-unstable) eigenvalues, and \emph{c-semi-stable} (resp. \emph{c-semi-unstable}) if it contains only c-stable or c-critical (resp. c-unstable or c-critical) eigenvalues. The same definitions hold replacing the prefix c- with d- if we replace the expression $\RE(\lambda)$ with $\abs{\lambda}-1$.
\end{definition}
\begin{definition}
Let $\mathcal M \in \C^{k,k}$ be given. A subspace $\cV\subset\C^{k}$ is called \emph{$\mathcal{M}$-neutral} if $x^*\mathcal{M} y=0$ for all $x,\,y\in {\cV}$.
\end{definition}
\begin{definition}\label{con-obs}
Let a~pair $(A,B)\in\C^{n,n}\times\C^{n,m}$ be given. Then
\begin{enumerate}[(i)]
   \item $(A,B)$ is called \emph{controllable} if $\rank[\,s I\!-\!A\,,\,B\,]=n$ for all $s\in\C$;
   \item $(A,B)$ is called \emph{stabilizable} if $\rank[\,s I\!-\!A\,,\,B\,]=n$ for all $s\in\overline{\C^+}$.
\end{enumerate}
\end{definition}
\begin{definition}
Given $\gamma \in \mathbb R$, $\gamma \neq 0$, the \emph{Cayley transform} of a regular pencil $s\mathcal E-\mathcal A$ is the pencil
\[
 s\mathcal E_\gamma-\mathcal A_\gamma,\qquad \mathcal E_\gamma=A+\gamma\mathcal E_\gamma, \quad \mathcal A_\gamma=A-\gamma\mathcal E.
\]
\end{definition}
The Cayley transform preserves left and right eigenvectors and Jordan/Kronecker chains, while transforms the associated eigenvalues according to the map $\mathcal C:\lambda \mapsto \frac{\lambda-\gamma}{\lambda+\gamma}$, $\lambda \in \mathbb C \cup \infty$. In particular, Kronecker blocks of size $k$ for $\lambda$ are mapped to Kronecker blocks of size $k$ for $\mathcal C(\lambda)$.

\section{Solvability of Lur'e equations}\label{sec:solv}
In this part we collect theoretical results being equivalent for the solvability of Lur'e equations. For convenience, we will call a Hermitian matrix $X$ a~solution of the Lur'e equations if 
(\ref{eq:lure}) is fulfilled for some $K\in\C^{p,n}$, $L\in\C^{p,m}$.

We now introduce some further concepts which are used to characterize solvability of the Lur'e equations.
\begin{definition}\label{def:spdensfct}
For Lur'e equations (\ref{eq:lure}), the {\em spectral density function} is defined as
\begin{equation}
\Phi(i\omega)=\begin{bmatrix}(i\omega I-A)^{-1}B\\I_m\end{bmatrix}^*\begin{bmatrix}Q&C\\C^*&R\end{bmatrix}
\begin{bmatrix}(i\omega I-A)^{-1}B\\I_m\end{bmatrix}
\label{eq:popfunc}
\end{equation}
\end{definition}

In several works, the spectral density function is also known as {\em Popov function}.
\begin{definition}\label{def:lmi}
For Lur'e equations (\ref{eq:lure}), the {\em associated linear matrix inequality (LMI)} is defined as
\begin{equation}\begin{bmatrix}A^*Y+YA+Q& YB+C\\B^*Y+C^*&R\end{bmatrix}\geq0.\label{eq:lmi}\end{equation}
The solution set of the LMI is defined as 
\begin{equation}\mathcal{S}_{LMI}=\{Y\in\C^{n,n}\,:\,Y\text{ is Hermitian and (\ref{eq:lmi}) holds true}\}.\label{eq:lmi_sol}\end{equation}
The LMI (\ref{eq:lmi}) is called {\em feasible} if $\mathcal{S}_{LMI}\neq\emptyset$.
\end{definition}
It can be readily verified that $Y\in\mathcal{S}_{LMI}$ solves the Lur'e equations if it minimizes the rank of (\ref{eq:lmi}).
We now collect some known equivalent solvability criteria Lur'e equations. In the following we require that the pair $(A,B)$ is stabilizable. Note that this assumption can be further weakened
by reducing it to {\em sign-controllability} \cite{Rei09}. This is not considered here in more detail.
\begin{theorem}\label{thm:lure_solvability}
Let the Lur'e equations (\ref{eq:lure}) with associated even matrix pencil $s\mathcal{E}-\mathcal{A}$ as in (\ref{eq:evmatpen}) and spectral density function $\Phi$ as in (\ref{eq:popfunc}) be given. Assume that at least one of the claims
\begin{enumerate}[(i)]
\item the pair $(A,B)$ is stabilizable and the pencil $s\mathcal{E}-\mathcal{A}$ as in (\ref{eq:evmatpen}) is regular;
\item the pair $(A,B)$ is controllable;
\end{enumerate}
holds true.
Then the following statements are equivalent:
\begin{enumerate}
\item There exists a solution $(X,K,L)$ of the Lur'e equations.
\item The LMI (\ref{eq:lmi}) is feasible
\item For all $\omega\in\R$ with $i\omega\notin\sigma(A)$ holds $\Phi(i\omega)\geq0$;
\item In the EKCF of $s\mathcal{E}-\mathcal{A}$, all blocks of type E2 have positive signature and even size, and all
blocks of type E3 have negative sign and odd size.
\item In the EKCF of $s\mathcal{E}-\mathcal{A}$, all blocks of type E2 have even size, and all
blocks of type E3 have negative sign and odd size.
\end{enumerate}
In particular, solutions of the Lur'e equations fulfill $(X,K,L)\in\C^{n,n}\times\C^{n,p}\times\C^{m,p}$ with $p=\normalrank\Phi$.
\end{theorem}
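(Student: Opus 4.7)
The plan is to split the equivalence into the ring $1 \Rightarrow 2 \Rightarrow 3 \Rightarrow 5 \Rightarrow 4 \Rightarrow 1$, doing the two analytic implications ($1 \Leftrightarrow 2$ and $2 \Leftrightarrow 3$) by hand and appealing to the EKCF machinery of \cite{Rei09} for the structural ones involving the pencil $s\mathcal{E} - \mathcal{A}$.

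For $1 \Rightarrow 2$, substitute the three identities of (\ref{eq:lure}) directly into the block matrix of (\ref{eq:lmi}); it factors as $\begin{bmatrix}K^*\\L^*\end{bmatrix}\begin{bmatrix}K & L\end{bmatrix}$ and is therefore positive semi-definite, so $X \in \mathcal{S}_{LMI}$. The converse $2 \Rightarrow 1$ invokes the remark made just before the theorem: a $Y \in \mathcal{S}_{LMI}$ that attains the minimum rank of (\ref{eq:lmi}) (which exists since rank is $\{0,1,\ldots,n+m\}$-valued) admits a minimal full-rank factorization $\begin{bmatrix}K^*\\L^*\end{bmatrix}\begin{bmatrix}K & L\end{bmatrix}$, and reading off the three blocks yields a Lur'e triple with $p = \normalrank \Phi$.

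For $2 \Rightarrow 3$, I would compress the LMI on the left by $\bigl[B^*(-i\omega I - A^*)^{-1},\ I_m\bigr]$ and on the right by its adjoint. The cross terms involving $Y$ collapse via the algebraic identity $A^*Y + YA = (A^* + i\omega I) Y + Y(A - i\omega I)$, which lets the outer resolvents cancel with the inner factors, leaving exactly $\Phi(i\omega)$. The reverse direction $3 \Rightarrow 2$ is the singular Kalman--Yakubovich--Popov lemma; it is obtained by constructing $Y$ from the c-semi-stable deflating subspace of $s\mathcal{E}-\mathcal{A}$, which exists under either of the hypotheses (i), (ii). This is the content of the main theorem of \cite{Rei09} and is where stabilizability (respectively controllability) is essential, so I would cite rather than reprove.

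For the EKCF conditions, $4 \Rightarrow 5$ is trivial, and $5 \Rightarrow 3$ as well as $3 \Rightarrow 4$ follow by local analysis of $\Phi(i\omega)$ in the normal form of Theorem~\ref{thm:ewcf}. Near a purely imaginary eigenvalue $i\mu$, the contribution of an E2 block of size $k_j$ and signature $\epsilon_j$ to the compressed Popov function behaves, after congruence, like $\epsilon_j (\omega-\mu)^{k_j}$ up to higher order; positivity on the imaginary axis therefore forces $k_j$ even and $\epsilon_j = +1$, and the analogous argument at $\omega = \infty$ handles the E3 blocks. To close $5 \Rightarrow 4$ one observes that if some even-size E2 block had $\epsilon_j = -1$, the minimal rank of (\ref{eq:lmi}) achievable would exceed $\normalrank \Phi$, contradicting the factorization produced in $2 \Rightarrow 1$.

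The hard step is clearly $3 \Rightarrow 2$ in the singular case, which is precisely what \cite{Rei09} established via the even pencil; the rest is essentially algebra plus the local block analysis of the EKCF. I would present it as a sequence of citations to \cite{Rei09} for the spectral-theoretic implications, together with the short direct arguments sketched above for $1 \Leftrightarrow 2$ and $2 \Rightarrow 3$.
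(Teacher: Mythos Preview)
The paper does not prove this theorem at all: it is stated as a compilation of known results, with the substance deferred to \cite{Rei09} (and the earlier literature cited there). So there is no ``paper's own proof'' to compare against; your proposal in fact goes further than the paper by sketching the easy implications explicitly and citing \cite{Rei09} only for the hard KYP-type step $3\Rightarrow 2$, which is exactly how the paper treats the whole theorem.

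Your outline is sound, with two small remarks. First, once you have $5\Rightarrow 3$ and $3\Rightarrow 4$ from the local EKCF analysis, the chain $5\Rightarrow 3\Rightarrow 4$ already gives $5\Rightarrow 4$; the separate argument you add (``if some even-size E2 block had $\epsilon_j=-1$ then the minimal LMI rank would exceed $\normalrank\Phi$'') is redundant, and as stated it presupposes the identity $p=\normalrank\Phi$ that is part of the conclusion rather than something you have already established. Second, in the $2\Rightarrow 1$ step you correctly produce a rank-minimizing factorization, but the assertion that the resulting $p$ equals $\normalrank\Phi$ is not justified by that argument alone; the inequality $\normalrank\Phi\le p$ follows from your compression argument in $2\Rightarrow 3$, while the reverse inequality genuinely needs the EKCF block count from \cite{Rei09}. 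Neither point is a real gap---both are handled by the material you are already citing---but the write-up would be cleaner if you dropped the extra $5\Rightarrow 4$ paragraph and deferred the $p=\normalrank\Phi$ claim to after the EKCF implications.
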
~\\
~\\
It is shown in \cite{Rei09} that $m-\normalrank\Phi$ is to the number of blocks of type E4 in an EKCF of $s\mathcal{E}-\mathcal{A}$. In particular, the pencil $s\mathcal{E}-\mathcal{A}$ is regular if and only if $\Phi$ has full normal rank.

Now we place particular emphasis on the so-called {\em maximal solution}.
\begin{theorem}\label{thm:maxsol}
Let the Lur'e equations (\ref{eq:lure}) be given with stabilizable pair $(A,B)$. Assume that $\mathcal{S}_{LMI}$ as defined in (\ref{eq:lmi_sol}) is non-empty. 
Then there exists a~solution $X_+$ of the Lur'e equations that is maximal in the sense that for all $Y\in\mathcal{S}_{LMI}$ holds
\[Y\leq X_+.\]
\end{theorem}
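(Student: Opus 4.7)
The plan is to construct $X_+$ from the largest c-semi-stable deflating subspace of the even pencil $s\mathcal{E}-\mathcal{A}$ in (\ref{eq:evmatpen}) and then establish maximality by a Lyapunov-type comparison argument that exploits stabilizability of $(A,B)$.

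First, since $\mathcal{S}_{LMI}\neq\emptyset$, Theorem~\ref{thm:lure_solvability} forces the EKCF of $s\mathcal{E}-\mathcal{A}$ to consist of E1 blocks (paired eigenvalues $(\lambda,-\overline{\lambda})$), E2 blocks of even size on the imaginary axis, E3 blocks of odd size with signature $-1$ at infinity, and E4 blocks for the singular part. In each E1 block exactly half of the Jordan chains correspond to eigenvalues in $\C^-$; in each E2/E3 block the parity of the size allows a splitting of the chain into two equal $\mathcal{E}$-neutral halves; and each E4 block contributes a full ``left'' singular chain. Aggregating the c-stable part of the E1 blocks, one chosen half of each E2 and E3 block, and the singular chains from the E4 blocks yields an $\mathcal{E}$-neutral c-semi-stable deflating subspace $\cV_+\subset\C^{2n+m}$ of dimension $n$; this is the canonical maximal such subspace.

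Second, write a basis of $\cV_+$ as the columns of $V=[V_1^T\; V_2^T\; V_3^T]^T$ with blocks of sizes $n,n,m$, with defining relation $(s\mathcal{E}-\mathcal{A})V=W(s\widetilde{E}-\widetilde{A})$. A Hautus-type argument using the stabilizability of $(A,B)$ (cf.~\cite{Rei09}) shows that $V_1$ is nonsingular. Setting $X_+:=V_2V_1^{-1}$ and reading $K,L$ off the appropriate blocks of $W$, the three equations in (\ref{eq:lure}) are verified directly; Hermiticity of $X_+$ is a consequence of the $\mathcal{E}$-neutrality of $\cV_+$.

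Third, to establish maximality, take $Y\in\mathcal{S}_{LMI}$ and factor the LMI matrix (\ref{eq:lmi}) as
\[
\begin{bmatrix}A^*Y+YA+Q & YB+C\\ B^*Y+C^* & R\end{bmatrix} = \begin{bmatrix}\widetilde{K}^*\\ \widetilde{L}^*\end{bmatrix}\begin{bmatrix}\widetilde{K} & \widetilde{L}\end{bmatrix}+\begin{bmatrix}M & 0\\ 0 & 0\end{bmatrix},\qquad M\geq 0.
\]
Subtracting the Lur'e decomposition for $X_+$ and completing squares by means of a feedback $F$ extracted from the closed-loop dynamics on $\cV_+$ produces
\[
(A+BF)^*(X_+-Y)+(X_+-Y)(A+BF)\leq 0,
\]
with $\sigma(A+BF)\subset\overline{\C^-}$; a standard perturbation of the critical eigenvalues together with stabilizability of $(A,B)$ makes $A+BF$ Hurwitz, so Lyapunov-inequality theory yields $X_+-Y\geq 0$.

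The main obstacle is this final step: with $R$ singular, squares cannot be completed using $R^{-1}$ as in the regular Riccati case, so $F$ must be constructed from the factor $L$ using a pseudoinverse together with a basis of $\ker R$, and the cross-terms involving $\widetilde{L}$ and $L$ must cancel exactly. Verifying that the residual is a nonpositive quadratic form, and that purely imaginary and infinite eigenvalues of $s\mathcal{E}-\mathcal{A}$ do not obstruct strict stability of $A+BF$, is where the deflating-subspace framework of \cite{Rei09} is indispensable.
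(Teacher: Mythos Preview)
The paper does not prove Theorem~\ref{thm:maxsol}; Section~\ref{sec:solv} collects known results, and this one is quoted from \cite{Rei09} without argument. So there is no proof in the paper to compare against, and your proposal has to stand on its own. The overall strategy---build $X_+$ from a c-semi-stable $\mathcal{E}$-neutral deflating subspace, then prove maximality by a Lyapunov comparison---is the right one, but the execution has two concrete errors and one genuine gap.

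First, the dimension count is wrong. The canonical $\mathcal{E}$-neutral c-semi-stable deflating subspace of $s\mathcal{E}-\mathcal{A}$ has dimension $n+m$, not $n$; see \eqref{eq:lagrsol} and \eqref{eq:lagrparam}. Each odd E3 block contributes $(k_j+1)/2$ vectors and each E4 block of size $2k_j-1$ contributes $k_j$, and since the number of E3 and E4 blocks together equals $m=\dim\ker\mathcal{E}$, the total comes to $n+m$. An $n$-dimensional space cannot carry the $m$ ``control'' directions, so your block partition $V=[V_1^T\;V_2^T\;V_3^T]^T$ with $V_1\in\C^{n,n}$ does not match the object you claim to have constructed.

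Second, with the block ordering of \eqref{eq:evmatpen}, the basis \eqref{eq:lagrsol} shows that the \emph{middle} $n$-block equals $I_n$ and the \emph{top} $n$-block equals $X_+$. Hence the invertibility argument must target $V_2$, and the correct formula is $X_+=V_1V_2^{-1}$ (on the first $n$ columns). Your statement that $V_1$ is nonsingular and $X_+=V_2V_1^{-1}$ swaps these roles.

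Third, the obstacle you flag in the maximality step is a real gap, not a technicality. When E2 blocks are present, the closed loop $A+BF$ extracted from the semi-stable subspace has eigenvalues on $i\R$, and from
\[
(A+BF)^*(X_+-Y)+(X_+-Y)(A+BF)\le 0,\qquad \sigma(A+BF)\subset\overline{\C^-},
\]
one \emph{cannot} conclude $X_+-Y\ge 0$ by Lyapunov theory alone. The phrase ``a standard perturbation of the critical eigenvalues together with stabilizability of $(A,B)$ makes $A+BF$ Hurwitz'' hides exactly the nontrivial part of the argument in \cite{Rei09}: one has to exploit the even size and positive signature of the E2 blocks to control the critical chains and pass to the limit. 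Without that, the maximality claim is unproved.
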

The following result \cite{Rei09} states that the maximal solution can be constructed via the c-stable deflating subspace of the associated even matrix pencil $s\mathcal{E}-\mathcal{A}$.
\begin{theorem}\label{thm:lagsplur}
Let the Lur'e equations (\ref{eq:lure}) be given with stabilizable pair $(A,B)$. Assume that $\mathcal{S}_{LMI}$ as defined in (\ref{eq:lmi_sol}) is non-empty. Then
\begin{equation}\label{eq:lagrsol}
\begin{bmatrix}
X_+ & 0\\
I_n & 0\\
0 & I_m
\end{bmatrix}
\end{equation}
spans the unique $(n+m)$-dimensional semi-c-stable $\mathcal{E}-neutral$ subspace of the pencil \eqref{eq:evmatpen}.
\end{theorem}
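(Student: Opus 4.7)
The plan is to check that $V := \begin{bmatrix} X_+ & 0 \\ I_n & 0 \\ 0 & I_m \end{bmatrix}$ enjoys each of the defining properties (dimension $n+m$, $\mathcal{E}$-neutrality, and semi-c-stable deflating subspace), and then argue that no other such subspace exists. The first two are immediate: the identity blocks force $V$ to have full column rank, and a direct block multiplication using $X_+^* = X_+$ and the explicit form of $\mathcal{E}$ read off from \eqref{eq:evmatpen} yields $V^*\mathcal{E} V = 0$.

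For the deflating-subspace property, I would compute $(s\mathcal{E}-\mathcal{A})V$ block by block and substitute all three Lur'e equations \eqref{eq:lure} to obtain the factorisation
\begin{equation*}
(s\mathcal{E}-\mathcal{A})V = \begin{bmatrix} -I_n & 0 \\ X_+ & K^* \\ 0 & L^* \end{bmatrix} \begin{bmatrix} sI_n-A & -B \\ K & L \end{bmatrix},
\end{equation*}
which fits \eqref{eq:defl_pencil_form} with $l = n+p$ and identifies the reduced pencil $s\widetilde{\mathcal{E}} - \widetilde{\mathcal{A}} = \begin{bmatrix} sI_n-A & -B \\ K & L \end{bmatrix}$ whose finite eigenvalues carry the stability information of $\im V$.

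Semi-c-stability of these eigenvalues is the main technical obstacle. For a right eigenvector $\begin{bmatrix} v \\ u \end{bmatrix}$ of the reduced pencil at $\lambda$, one has $(A - \lambda I)v = -Bu$ and $Kv + Lu = 0$. Since the Lur'e equations give $[K,L]^*[K,L]$ as the LMI matrix \eqref{eq:lmi} evaluated at $X_+$, expanding $0 = \| Kv + Lu \|^2$ and eliminating the $A$-terms via $Av = \lambda v - Bu$ produces the scalar identity
\begin{equation*}
2\RE(\lambda)\, v^* X_+ v + \begin{bmatrix} v \\ u \end{bmatrix}^* \begin{bmatrix} Q & C \\ C^* & R \end{bmatrix} \begin{bmatrix} v \\ u \end{bmatrix} = 0.
\end{equation*}
The same computation with any $Y \in \mathcal{S}_{LMI}$ in place of $X_+$ yields $\geq 0$ by LMI feasibility; subtracting the two gives $2\RE(\lambda)\,v^*(Y-X_+)v \geq 0$ for every $Y \in \mathcal{S}_{LMI}$. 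Combined with $Y \leq X_+$ from Theorem~\ref{thm:maxsol}, the case $\RE(\lambda) > 0$ is excluded by producing a feasible $Y^\star \in \mathcal{S}_{LMI}$ strictly larger than $X_+$ along the direction $v$, violating maximality. The delicate point is constructing such a perturbation in the singular-$R$ setting, where admissibility restricts the direction to the subspace compatible with $Kv + Lu = 0$.

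Uniqueness follows from an EKCF analysis of $s\mathcal{E}-\mathcal{A}$ via Theorem~\ref{thm:ewcf}. Under the hypotheses of Theorem~\ref{thm:lure_solvability} the parities and signatures of the E2 and E3 blocks are fixed, and the E4 blocks contribute $m-p$ to the normal-rank defect. A dimension count combined with the Lagrangian-type condition imposed by $\mathcal{E}$-neutrality leaves exactly one admissible $(n+m)$-dimensional semi-c-stable deflating subspace: the c-stable halves of the E1 blocks, together with the uniquely determined $\mathcal{E}$-isotropic halves inside each purely imaginary (E2) and singular (E4) block forced by the parity constraints, and the canonical contribution from each E3 block. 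By the existence part already proved, this unique subspace must coincide with $\im V$.
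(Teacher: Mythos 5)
First, a point of order: the paper does not actually prove Theorem~\ref{thm:lagsplur} --- it is quoted from \cite{Rei09} --- so there is no internal proof to compare against, and your argument has to stand on its own. The routine parts do: the rank and $\mathcal{E}$-neutrality checks are correct, the factorization
\[
(s\mathcal{E}-\mathcal{A})V=\begin{bmatrix}-I_n&0\\X_+&K^*\\0&L^*\end{bmatrix}\begin{bmatrix}sI_n-A&-B\\K&L\end{bmatrix}
\]
is verified by substituting all three equations of \eqref{eq:lure}, and the scalar identity $2\RE(\lambda)\,v^*X_+v+\left[\begin{smallmatrix}v\\u\end{smallmatrix}\right]^*\left[\begin{smallmatrix}Q&C\\C^*&R\end{smallmatrix}\right]\left[\begin{smallmatrix}v\\u\end{smallmatrix}\right]=0$ for an eigenpair of the reduced pencil is a correct computation.

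The genuine gap is exactly at your self-declared ``delicate point,'' and it is not a technicality but the heart of the theorem. Your two relations give $2\RE(\lambda)\,v^*(Y-X_+)v\ge 0$ for every $Y\in\mathcal{S}_{LMI}$, while Theorem~\ref{thm:maxsol} gives $v^*(Y-X_+)v\le 0$; if $\RE(\lambda)>0$ the only conclusion is $v^*Yv=v^*X_+v$ for \emph{all} feasible $Y$, which is not yet a contradiction. To close it you would need a feasible $Y$ with $v^*Yv<v^*X_+v$ (a minimal solution need not exist under mere stabilizability), or a feasible perturbation of $X_+$ that increases $v^*X_+v$ --- but $X_++\tau vv^*$ generically destroys feasibility of \eqref{eq:lmi}, and in the singular-$R$ setting no admissible perturbation direction is exhibited. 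This is precisely why the literature proceeds in the opposite direction: one first \emph{constructs} a solution from the c-semi-stable $\mathcal{E}$-neutral subspace via the EKCF (or via the regularized equations \eqref{eq:are_pert} and a limit $\eps\to 0$) and then proves that this solution is maximal, rather than starting from the LMI-maximal $X_+$ and trying to deduce semi-stability of its closed loop.

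The uniqueness paragraph is likewise only a sketch. The construction \eqref{eq:lagrparam} after the theorem tells you \emph{which} subspace is meant, but uniqueness requires showing that inside each E2 block of even size $k_j$ the leading $k_j/2$ chain vectors give the \emph{only} $k_j/2$-dimensional choice compatible with $\mathcal{E}$-neutrality and the deflating property, and similarly for the E3 and E4 blocks; ``a dimension count leaves exactly one admissible subspace'' asserts this rather than proves it. So the architecture of the proposal is sound and consistent with the discussion surrounding the theorem, but the exclusion of c-unstable eigenvalues and the uniqueness argument both remain unproven.
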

The above theorem states that the maximal solution can be expressed in terms of a special deflating subspace of $s\mathcal{E}-\mathcal{A}$. By means of the EKCF, this space can be constructed from the matrix $U\in\C^{2n+m,2n+m}$ bringing the pencil $s\mathcal{E}-\mathcal{A}$ into even Kronecker form (\ref{eq:kronform_ev}).
Considering the partitioning
$U=[\,U_1\,,\ldots,\,U_k\,]$ according to the block structure of the EKCF, a~matrix $V\in\C^{2n+m,n+m}$ spanning the desired deflating subspace can be constructed by
\begin{equation} V=\begin{bmatrix}V_1&\ldots&V_k\end{bmatrix}\quad\text{ for }V_j=U_jZ_j,\label{eq:lagrparam}\end{equation}
where
\[
Z_j=\begin{cases}
\text{$[\,I_{k_j}\,,\,0_{k_j}\,]^T,$}&\text{if $\mathcal{D}_j$ is of type E1,}\\
\text{$[\,I_{k_j/2}\,,\,0_{k_j/2}\,]^T,$}\quad&\text{if $\mathcal{D}_j$ is of type E2,}\\
\text{$[\,I_{(k_j+1)/2}\,,\,0_{(k_j-1)/2}\,]^T,$}\quad&\text{if $\mathcal{D}_j$ is of type E3,}\\
\text{$[\,I_{k_j}\,,\,0_{k_j+1}\,]^T,$}\quad&\text{if $\mathcal{D}_j$ is of type E4.}
\end{cases}
\]
In particular, the desired subspace contains all the vectors belonging to the Kronecker chains relative to c-stable eigenvalues, no vectors from the Kronecker chains relative to c-unstable eigenvalues, the first $k_j/2$ vectors from the chains relative to c-critical eigenvalues, and the first $(k_j+1)/2$ from the chains relative to eigenvalues at infinity.

\section{Outline of SDA}\label{sec:SDA}
The structured doubling algorithm (SDA) is a matrix iteration which computes two special deflating subspaces of a matrix pencil. It was introduced by Anderson \cite{And78} as an algorithm for the solution of a discrete-time algebraic Riccati equation, and later adapted to many other equations and explained in terms of matrix pencils in several papers by Wen-Wei Lin and others \cite{CFL05,LinXu06,HuaLin09,CCGHLX09}. It is strongly related to the sign function method and to the disc method for matrix pencils \cite{BDG97, Ben97}.

\begin{theorem}[\cite{BDG97}] \label{sdasquaring}
Let $\mathcal A-s\mathcal E$ be a regular matrix pencil with $\mathcal A,\mathcal E\in \mathbb R^{N+M,N+M}$, and let $\mathcal A_*-s\mathcal E_*$ be a regular pencil of the same size with $\mathcal A_* \mathcal E=\mathcal E_*\mathcal A$. Then
\begin{enumerate}
\item the pencil $\mathcal A_*\mathcal A-s\mathcal E_*\mathcal E$ is regular and has the same right deflating subspaces as $\mathcal A-s\mathcal E$
\item its eigenvalues are the square of the eigenvalues of the original pencil.
\end{enumerate}
\end{theorem}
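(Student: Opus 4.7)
The entire argument revolves around the polynomial identity
\begin{equation*}
(\mathcal{A}_* + s\mathcal{E}_*)(\mathcal{A} - s\mathcal{E}) \;=\; \mathcal{A}_*\mathcal{A} - s^2\mathcal{E}_*\mathcal{E},
\end{equation*}
which follows by expanding the left-hand side and using the hypothesis $\mathcal{A}_*\mathcal{E}=\mathcal{E}_*\mathcal{A}$ to cancel the two cross terms linear in $s$. This factorization is the engine driving both parts of the theorem.

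For the regularity claim, I would take determinants in the identity to obtain $\det(\mathcal{A}_*+s\mathcal{E}_*)\det(\mathcal{A}-s\mathcal{E}) = \det(\mathcal{A}_*\mathcal{A}-s^2\mathcal{E}_*\mathcal{E})$. Both factors on the left are nonzero polynomials in $s$ by the regularity of the two given pencils, hence their product is a nonzero polynomial in $s$ and therefore a nonzero polynomial in $t=s^2$. The eigenvalue statement in part~(2) then follows from the direct computation: if $\mathcal{A}v=\lambda\mathcal{E}v$ for $v\neq 0$ then
\[
\mathcal{A}_*\mathcal{A}v \;=\; \lambda\,\mathcal{A}_*\mathcal{E}v \;=\; \lambda\,\mathcal{E}_*\mathcal{A}v \;=\; \lambda^2\,\mathcal{E}_*\mathcal{E}v,
\]
so $\lambda^2$ is an eigenvalue of the squared pencil at the same eigenvector. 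A degree count in the determinantal identity (total $s$-degree $2(N+M)$ on the left, $t$-degree $N+M$ on the right) forces the squaring map $\lambda\mapsto\lambda^2$ to exhaust the spectrum with matching algebraic multiplicities; infinite eigenvalues are treated symmetrically using the dual factorization in $\mathcal{A}+s\mathcal{E}$.

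For the right deflating subspace part of~(1), let $V$ span such a subspace of $\mathcal{A}-s\mathcal{E}$, with $\mathcal{A}V=W\widetilde{A}$ and $\mathcal{E}V=W\widetilde{E}$. Applying the commutativity to $V$ yields the auxiliary identity $\mathcal{A}_*W\widetilde{E}=\mathcal{E}_*W\widetilde{A}$. Multiplying the two deflating relations by $\mathcal{A}_*$ and $\mathcal{E}_*$ respectively gives
\begin{equation*}
(\mathcal{A}_*\mathcal{A}-t\mathcal{E}_*\mathcal{E})V \;=\; \mathcal{A}_*W\,\widetilde{A}-t\,\mathcal{E}_*W\,\widetilde{E},
\end{equation*}
and I would choose $W'$ to be a matrix whose columns span $\im\mathcal{A}_*W+\im\mathcal{E}_*W$ in order to rewrite both $\mathcal{A}_*W\widetilde{A}$ and $\mathcal{E}_*W\widetilde{E}$ as $W'\widetilde{A}'$ and $W'\widetilde{E}'$, giving $V$ the deflating property for the squared pencil. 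For the converse inclusion I would reduce $\mathcal{A}-s\mathcal{E}$ to Kronecker canonical form (Theorem~\ref{thm:wcf}) and exploit that the commutativity hypothesis is preserved under simultaneous equivalence applied consistently to both pencils, verifying the correspondence block by block.

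The main obstacle I anticipate is this deflating-subspace step: the auxiliary identity $\mathcal{A}_*W\widetilde{E}=\mathcal{E}_*W\widetilde{A}$ lives only on the image of $W$, and converting it into a clean factorization $(\mathcal{A}_*\mathcal{A}-t\mathcal{E}_*\mathcal{E})V=W'(\widetilde{A}'-t\widetilde{E}')$ of the prescribed shape is straightforward when the inner pencil $\widetilde{A}-s\widetilde{E}$ is regular with either $\widetilde{A}$ or $\widetilde{E}$ invertible, but requires care when it mixes finite and infinite eigenvalues or carries singular Kronecker blocks, since then neither matrix can be inverted to relate $\im\mathcal{A}_*W$ and $\im\mathcal{E}_*W$ directly. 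The cleanest workaround is to split $V$ according to the spectral blocks of its restricted inner pencil and treat the finite- and infinite-spectrum contributions separately, on each of which the required substitutions become unambiguous.
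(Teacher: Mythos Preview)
The paper does not prove this theorem; it is cited from \cite{BDG97}, and the only justification offered in the text is the heuristic remark immediately following the statement, namely that when $\mathcal{E}$ is invertible the commutation hypothesis forces $(\mathcal{E}_*\mathcal{E})^{-1}(\mathcal{A}_*\mathcal{A}) = (\mathcal{E}^{-1}\mathcal{A})^2$, so the claim reduces to squaring an ordinary matrix. No argument is given for singular $\mathcal{E}$.

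Your route through the factorization $(\mathcal{A}_*+s\mathcal{E}_*)(\mathcal{A}-s\mathcal{E}) = \mathcal{A}_*\mathcal{A}-s^2\mathcal{E}_*\mathcal{E}$ is the standard device for results of this type and goes well beyond the paper's one-line heuristic: your determinantal argument for regularity is clean, and the eigenvector computation for part~(2) is correct as far as it goes. One point to tighten is the ``degree count'' for eigenvalue multiplicities: the determinantal identity also carries the factor $\det(\mathcal{A}_*+s\mathcal{E}_*)$, whose roots you have not controlled, so it does not by itself pin down the spectrum of the squared pencil. The cleanest fix is the one you already propose for the deflating-subspace half---push Kronecker chains, not just eigenvectors, through the relation $\mathcal{A}_*\mathcal{E}=\mathcal{E}_*\mathcal{A}$---which simultaneously yields the full block-by-block correspondence and the multiplicity statement. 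The obstacle you anticipate (mixed finite/infinite spectrum on the inner pencil $\widetilde{A}-s\widetilde{E}$) is real, and your suggested splitting into spectral components is precisely how the cited reference handles it.
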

This result is far from surprising in the case in which $\mathcal E$ is invertible: in this case, the eigenvalues and right deflating subspaces of $\mathcal A-s\mathcal E$ correspond to the eigenvalues and right invariant subspaces of $\mathcal E^{-1}\mathcal A$, and it is simple to check that the conditions imposed on $\mathcal A_*,\mathcal E_*$ imply $(\mathcal E_* \mathcal E)^{-1}(\mathcal A_*\mathcal A)=(\mathcal E^{-1} \mathcal A)^2$. Thus the map $(\mathcal A,\mathcal E)\mapsto (\mathcal A_*\mathcal A,\mathcal E_*\mathcal E)$ is a way to extend the concept of squaring to matrix pencils.

A pencil $\mathcal A-s\mathcal E$ with $\mathcal A,\mathcal E\in \mathbb R^{N+M,N+M}$ is said to be in \emph{standard symplectic-like form I (SSF-I)} if it can be written as
\begin{equation}\label{ssf}
\mathcal A=\begin{bmatrix} E & 0\\-H & I_M \end{bmatrix},\quad \mathcal E=\begin{bmatrix} I_N & -G\\0 & F \end{bmatrix},
\end{equation}
where the block sizes are such that $E\in \mathbb R^{N,N}$ and $F\in \mathbb R^{M,M}$. Note that a pencil in SSF-I is always regular.
When a pencil $\mathcal A-s\mathcal E$ is in SSF-I, a choice of $\mathcal A_*-s\mathcal E_*$ satisfying the requirements of Theorem~\ref{sdasquaring} is
\[
\mathcal A_*=\begin{bmatrix} E(I_N-GH)^{-1} & 0\\-F(I_M-HG)^{-1}H & I_N \end{bmatrix},\quad \mathcal E_*=\begin{bmatrix} I_N & -E(I_N-GH)^{-1}G\\0 & F(I_M-HG)^{-1} \end{bmatrix},
\]
yielding a new pencil $\mathcal A_*\mathcal A-s\mathcal E_*\mathcal E$ which is still in SSF-I:
\[
\widetilde{\mathcal A}=\begin{bmatrix} E(I_N-GH)^{-1}E & 0\\-(H+F(I_M-HG)^{-1}HE) & I_N \end{bmatrix},\quad \widetilde{\mathcal E}=\begin{bmatrix} I_N & -(G+E(I_N-GH)^{-1}GF)\\0 & F(I_M-HG)^{-1}F \end{bmatrix}.
\]
The only hypothesis needed here is that $I-GH$ and $I-HG$ are nonsingular. In fact, by the Sherman-Morrison formula, they are either both singular or both nonsingular.

We may design Algorithm~\ref{SDA} by repeating this transformation.
\begin{algorithm}[ht]
\SetKwData{Ez}{\ensuremath{E_0}}
\SetKwData{Fz}{\ensuremath{F_0}}
\SetKwData{Gz}{\ensuremath{G_0}}
\SetKwData{Hz}{\ensuremath{H_0}}

\SetKwData{Ek}{\ensuremath{E_k}}
\SetKwData{Fk}{\ensuremath{F_k}}
\SetKwData{Gk}{\ensuremath{G_k}}
\SetKwData{Hk}{\ensuremath{H_k}}
\SetKwData{Ebar}{\ensuremath{E_*}}
\SetKwData{Fbar}{\ensuremath{F_*}}
\SetKwData{Ekp}{\ensuremath{E_{k+1}}}
\SetKwData{Fkp}{\ensuremath{F_{k+1}}}
\SetKwData{Gkp}{\ensuremath{G_{k+1}}}
\SetKwData{Hkp}{\ensuremath{H_{k+1}}}

\SetKwData{XX}{\ensuremath{H_{\infty}}}
\SetKwData{YY}{\ensuremath{G_{\infty}}}
\SetKwData{kk}{\ensuremath{k}}

\SetKwInOut{Input}{input}
\SetKwInOut{Output}{output}
\Input{\Ez, \Fz, \Gz, \Hz defining a pencil in SSF-I}
\Output{\XX,\YY so that the subspaces in \eqref{sdaspaces} are respectively the canonical semi-d-stable and semi-d-unstable deflating subspaces of the given pencil}
$\kk\longleftarrow 0$\;
\While{a suitable stopping criterion is not satisfied}{
$\Ebar \longleftarrow \Ek(I_N-\Gk\Hk)^{-1}$\;
$\Fbar \longleftarrow \Fk(I_M-\Hk\Gk)^{-1}$\;
$\Gkp \longleftarrow \Gk+\Ebar\Gk\Fk$\;
$\Hkp \longleftarrow \Hk+\Fbar\Hk\Ek$\;
$\Ekp \longleftarrow \Ebar \Ek$\;
$\Fkp \longleftarrow \Fbar \Fk$\;
$\kk\longleftarrow \kk+1$\;
}
\XX=\Hk\;
\YY=\Gk\;
\caption{SDA-I}\label{SDA}
\end{algorithm}
Each step of the algorithm costs $\frac{14}3 (M^3+N^3)+6MN(M+N)$ floating point operations. This reduces to $\frac{64}3N^3$ in the case $M=N$.

The following result is proved in \cite{HuaLin09} for the symplectic case and in \cite{CCGHLX09} for several specific matrix equations, but its proof works without changes for our slightly more general case.

We shall call a pencil \emph{weakly d-split} if there exists an $r$ such that:
\begin{itemize}
\item the lengths of the Kronecker chains relative to d-stable eigenvalues sum up to $N-r$;
\item the lengths of the Kronecker chains relative to d-unstable eigenvalues sum up to $M-r$;
\item the lengths $k_j$ of the Kronecker chains relative to d-critical eigenvalues (which must sum up to $2r$ if the two previous properties hold) are all even.
\end{itemize}
In this case, we define the \emph{canonical} d-semi-stable (resp. d-semi-unstable) subspace as the invariant subspace spanned by all the Kronecker chains relative to d-stable (resp. d-unstable) eigenvalues, plus the first $k_j/2$ vectors from each critical chain.

\begin{theorem} Let the pencil \eqref{ssf} be weakly d-split, and suppose that there are two matrices in the form
\begin{equation}\label{sdaspaces}
\begin{bmatrix}
I_N\\H_{\infty}
\end{bmatrix},\quad
\begin{bmatrix}
G_{\infty}\\I_M
\end{bmatrix},
\end{equation}
spanning respectively the canonical d-stable and d-unstable deflating subspace.
Then for Algorithm~\ref{SDA} it holds that
\begin{itemize}
\item $\norm{E_k}=O(2^{-k})$,
\item $\norm{F_k}=O(2^{-k})$,
\item $\norm{H_{\infty}-H_k}=O(2^{-k})$,
\item $\norm{G_{\infty}-G_k}=O(2^{-k})$.
\end{itemize}
\end{theorem}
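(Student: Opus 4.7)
The plan is to follow \cite{HuaLin09,CCGHLX09}. The starting point is the observation, proved by induction on $k$ via Theorem~\ref{sdasquaring}, that at every step the iterates $(E_k,F_k,G_k,H_k)$ define a pencil still in SSF-I whose right deflating subspaces coincide with those of the original pencil and whose eigenvalues are the $2^k$-th powers of the original ones. In particular $V_s:=\begin{bmatrix}I_N\\H_\infty\end{bmatrix}$ and $V_u:=\begin{bmatrix}G_\infty\\I_M\end{bmatrix}$ continue to span the canonical d-semi-stable and d-semi-unstable deflating subspaces of $\mathcal A_k-s\mathcal E_k$ for all $k$, so there exist matrices $R_s^{(k)},R_u^{(k)}$ with spectra in the closed unit disc (the $2^k$-th powers of the d-(semi-)stable eigenvalues and the reciprocals of the $2^k$-th powers of the d-(semi-)unstable ones, respectively) such that $\mathcal A_k V_s=\mathcal E_k V_s R_s^{(k)}$ and $\mathcal E_k V_u=\mathcal A_k V_u R_u^{(k)}$. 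Writing these identities block by block yields the four coupled relations
\begin{align*}
E_k&=(I-G_kH_\infty)R_s^{(k)}, & H_\infty-H_k&=F_kH_\infty R_s^{(k)},\\
F_k&=(I-H_kG_\infty)R_u^{(k)}, & G_\infty-G_k&=E_kG_\infty R_u^{(k)}.
\end{align*}

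The next step is to estimate these four quantities as $k\to\infty$, which is done by reducing $R_s^{(0)}$ and $R_u^{(0)}$ to Jordan canonical form and examining each block in turn. On a block associated with a strictly d-stable eigenvalue $|\lambda|<1$ (resp.\ strictly d-unstable $|\lambda|>1$), the $2^k$-th power decays like $|\lambda|^{2^k}$ times a polynomial in $2^k$, which is far stronger than the asserted $O(2^{-k})$ rate. The delicate case is an even Jordan chain of size $2r$ at a unit-modulus eigenvalue $\lambda$, whose two halves lie simultaneously in $V_s$ and $V_u$: here each factor $R_s^{(k)}$ and $R_u^{(k)}$ individually grows polynomially in $2^k$, but an explicit binomial-coefficient expansion of $(\lambda I_r+N)^{2^k}$, combined with the cancellations forced by the SSF-I block structure and the four identities above, shows that the restrictions of $E_k$, $F_k$, $H_\infty-H_k$ and $G_\infty-G_k$ to this critical block decay precisely as $O(2^{-k})$.

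A final bootstrap closes the argument: the identities deliver the estimates only once one knows that $\|G_k\|,\|H_k\|$ remain bounded and that $I-G_kH_k$ is nonsingular at every step so the iteration is well-defined, and both facts follow immediately from $G_k\to G_\infty$ and $H_k\to H_\infty$ for $k$ large enough, while the finitely many initial iterations are handled by a direct perturbation argument. The critical-block calculation---which enforces the slow $O(2^{-k})$ rate and in which the even-length hypothesis on critical Kronecker chains built into ``weakly d-split'' is indispensable---is the main technical hurdle and forms the heart of \cite{CCGHLX09}. Its adaptation to our slightly more general setting amounts only to verifying that admitting $N\neq M$ and decoupling $E,F,G,H$ from any specific matrix-equation structure does not alter any step of that analysis.
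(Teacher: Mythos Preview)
Your proposal is correct and takes essentially the same approach as the paper: the paper does not give a self-contained proof but simply cites \cite{HuaLin09,CCGHLX09} and remarks that their argument carries over unchanged to the present, slightly more general, non-symplectic setting with $N\neq M$. Your outline faithfully summarizes the key steps of those cited proofs---the four block identities obtained from the deflating-subspace relations, the Jordan-block case analysis with the delicate even-length critical chains giving the $O(2^{-k})$ rate, and the well-definedness bootstrap.
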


Notice that, when $N=M$, a pencil in SSF-I is symplectic if and only if $E^*=F$, $G=G^*$ and $H=H^*$. In this case, all the pencils generated by the successive steps of SDA are symplectic, i.e., at each step $k$ we have $E_k^*=F_k$, $G_k=G_k^*$, $H_k=H_k^*$. The implementation can be slightly simplified, since there is no need to compute $E_{k+1}$ and $F_{k+1}$ separately, nor to invert both $I_N-G_k H_k$ and $I_M-H_K G_K$, as the second matrix of both pairs is the transposed of the first.

\section{A method to compute the SSF-I of a pencil}\label{sec:ssfilemma}
One can transform a regular pencil into SSF-I easily using the following result.
\begin{theorem}\label{thm:bmf}
Let $\mathcal A-s\mathcal E$ be a matrix pencil with $\mathcal A, \mathcal E \in \mathbb R^{N+M,N+M}$, and partition both matrices as 
\[
\mathcal A=\begin{bmatrix} \mathcal A_1 & \mathcal A_2\end{bmatrix}
\,\quad
\mathcal E=\begin{bmatrix} \mathcal E_1 & \mathcal E_2\end{bmatrix}
\]
with $\mathcal A_1,\mathcal E_1\in\mathbb R^{N+M,N}$ and $\mathcal A_2,\mathcal E_2\in \mathbb R^{N+M,M}$. A SSF-I pencil having the same eigenvalues and right deflating subspaces of the original pencil exists if and only if
\begin{equation}\label{toinvert}
\begin{bmatrix}
\mathcal E_1 & \mathcal A_2
\end{bmatrix}
\end{equation}
is nonsingular; in this case, it holds
\begin{equation}\label{bmf}
\begin{bmatrix}
E & -G\\
-H &F
\end{bmatrix}
=
\begin{bmatrix}
\mathcal E_1 & \mathcal A_2
\end{bmatrix}^{-1}
\begin{bmatrix}
\mathcal A_1 & \mathcal E_2
\end{bmatrix}.
\end{equation}
\end{theorem}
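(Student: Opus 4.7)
The plan is to interpret ``the same eigenvalues and right deflating subspaces'' as left-equivalence by a nonsingular matrix $T \in \Gl_{N+M}(\R)$. This interpretation is justified by Definition~\ref{alg_struct}: if $(s\mathcal{E} - \mathcal{A})V = W(s\widetilde{\mathcal{E}} - \widetilde{\mathcal{A}})$ witnesses a right deflating subspace and $T$ is invertible, then $T(s\mathcal{E} - \mathcal{A})V = (TW)(s\widetilde{\mathcal{E}} - \widetilde{\mathcal{A}})$ exhibits the very same subspace for the left-multiplied pencil, while eigenvalues are obviously preserved. Accordingly, I would look for an invertible $T$ such that $T\mathcal{A}$ and $T\mathcal{E}$ take the SSF-I shape in \eqref{ssf}.

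Matching block columns against the target gives the four conditions $T\mathcal{A}_1 = \bigl[\begin{smallmatrix}E \\ -H\end{smallmatrix}\bigr]$, $T\mathcal{A}_2 = \bigl[\begin{smallmatrix}0 \\ I_M\end{smallmatrix}\bigr]$, $T\mathcal{E}_1 = \bigl[\begin{smallmatrix}I_N \\ 0\end{smallmatrix}\bigr]$, $T\mathcal{E}_2 = \bigl[\begin{smallmatrix}-G \\ F\end{smallmatrix}\bigr]$. The two middle conditions involve no unknowns on the right-hand side; stacking them horizontally yields the single identity $T[\mathcal{E}_1 \ \mathcal{A}_2] = I_{N+M}$. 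Hence an invertible $T$ exists (and is uniquely determined) precisely when \eqref{toinvert} is nonsingular, in which case $T = [\mathcal{E}_1 \ \mathcal{A}_2]^{-1}$. Substituting back into the remaining two conditions gives $T[\mathcal{A}_1 \ \mathcal{E}_2] = \bigl[\begin{smallmatrix}E & -G \\ -H & F\end{smallmatrix}\bigr]$, which is exactly \eqref{bmf}.

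I do not anticipate any serious obstacle: the whole argument is a block-column identification of $T$ against the prescribed SSF-I pattern. Regularity of the resulting pencil is automatic since it is in SSF-I form, as already noted in the text. The only items worth a quick sanity check are that the extracted blocks $E, F, G, H$ inherit the correct sizes from the block partition of $[\mathcal{A}_1 \ \mathcal{E}_2]$, which is immediate from the stated sizes of $\mathcal{A}_i, \mathcal{E}_i$, and that the interpretation of ``same eigenvalues and right deflating subspaces'' as left-equivalence is indeed the one implicitly used throughout Section~\ref{sec:SDA}; this is clear from how the output of Algorithm~\ref{SDA} is to be read back into the original pencil.
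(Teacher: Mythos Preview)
Your argument is correct and essentially identical to the paper's own proof: the paper also seeks a left multiplier (called $Q$ there) so that the transformed pencil is in SSF-I form, reads off $Q\mathcal E_1=\bigl[\begin{smallmatrix}I\\0\end{smallmatrix}\bigr]$ and $Q\mathcal A_2=\bigl[\begin{smallmatrix}0\\I\end{smallmatrix}\bigr]$ to force $Q=[\mathcal E_1\ \mathcal A_2]^{-1}$, and then extracts \eqref{bmf} from the remaining two block columns. Your additional remarks on why left-equivalence preserves right deflating subspaces and on automatic regularity of SSF-I pencils are fine elaborations but not substantive departures.
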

\begin{proof}
We are looking for a matrix $Q$ such that
\[
Q
\begin{bmatrix} \mathcal A_1 & \mathcal A_2\end{bmatrix}-s Q\begin{bmatrix} \mathcal E_1 & \mathcal E_2\end{bmatrix}=
\begin{bmatrix}
E & 0\\
-H & I
\end{bmatrix}
-s
\begin{bmatrix}
I & -G\\0 & F
\end{bmatrix}.
\]
By taking only some of the blocks from the above equation, we get
\[
Q\mathcal E_1=\begin{bmatrix}I\\0\end{bmatrix},\quad
Q\mathcal A_2=\begin{bmatrix}0\\I\end{bmatrix},
\]
i.e.,
\[
Q \begin{bmatrix}
\mathcal E_1 & \mathcal A_2
\end{bmatrix}
=
\begin{bmatrix}
I & 0 \\ 0 & I
\end{bmatrix}
,
\]
thus $Q$ must be the inverse of the matrix in \eqref{toinvert}.

On the other hand, taking the other two blocks we get
\[
Q\mathcal A_1=\begin{bmatrix}E\\-H\end{bmatrix},\quad
Q\mathcal E_2=\begin{bmatrix}-G\\F\end{bmatrix},
\]
which promptly yields \eqref{bmf}.
\end{proof}
This formula is strictly related to the principal pivot transform (PPT) \cite{Tsa00}.

We point out an interesting application of Theorem~\ref{thm:bmf}, which is not related to the rest of the paper. SDA is often used in the solution of nonsymmetric algebraic Riccati equations (NARE) \cite{GuoLaub99}, where it is applied to the Cayley transform of the matrix
\[
\mathcal H=\begin{bmatrix}
D & -C\\
B & -A
\end{bmatrix},
\]
with $A$, $B$, $C$, $D$ blocks of suitable size associated with the coefficients of the problem. In pencil form, its Cayley transform given by $(\mathcal H-\gamma I)-s(\mathcal H+\gamma I)$, thus \eqref{bmf} becomes
\begin{equation}\label{bmf-nare}
\begin{bmatrix}
E & -G\\
-H &F
\end{bmatrix}
=
\begin{bmatrix}
D+\gamma I & -C\\
B & -A-\gamma I
\end{bmatrix}^{-1}
\begin{bmatrix}
D-\gamma I & -C\\
B & -A+\gamma I
\end{bmatrix}.
\end{equation}
% 32/3N^3 vs. 68/3N^3 
This formula is more compact to write and more computationally effective than the one suggested by Guo \emph{et al.} \cite{GLX06}. In fact, their expressions for the starting blocks involve computing the inverses of two $N\times N$ and $M\times M$ matrices, which are indeed the $(1,1)$ and $(2,2)$ blocks of the matrix to be inverted in \eqref{bmf-nare} and their Schur complements. Clearly, two these inversions are redundant in \eqref{bmf-nare}, which requires more or less half of the computational cost with respect to the original formulas in \cite{GLX06}.

\section{A reduced Lur'e pencil}\label{sec:reduction}
Let $s\mathcal E - \mathcal A$ be the pencil in \eqref{eq:evmatpen}. By Theorem~\ref{thm:bmf}, the SSF-I form of its Cayley transform (assuming $M=n+m$, $N=n$) is given by
\begin{equation}\label{bmf-lure}
\begin{bmatrix}E & -G\\-H & F \end{bmatrix}=
\begin{bmatrix}0 & A-\gamma I & B\\A^*-\gamma I & Q & C\\B^* &C^*& R\end{bmatrix}^{-1}
\begin{bmatrix}0 & A+\gamma I & B\\A^*+\gamma I & Q & C\\B^* &C^*& R\end{bmatrix}.
\end{equation}
Let now
\[
\widetilde A:=\begin{bmatrix}
0 & A-\gamma I\\
A^*-\gamma I & Q
\end{bmatrix},
S=\begin{bmatrix}
0 & I_n\\
I_n & I
\end{bmatrix},
\]
and assume that both $\widetilde A$ and its Schur complement
\[
R-\begin{bmatrix}B^* & C^*\end{bmatrix}\widetilde A^{-1}\begin{bmatrix}B\\C\end{bmatrix}=\Phi(\gamma)
\]
(here $\Phi$ is the same function as in \eqref{eq:popfunc}, as one can verify by expanding both expressions) are nonsingular. In this case we can perform the inversion with the help of a block LDU factorization. Tedious computations lead to a matrix of the form
\[
\begin{bmatrix}
\widehat A & 0\\
\widehat B & I_m
\end{bmatrix},
\]
with
\begin{equation}\label{tedious}
\widehat A=I+2\gamma \widetilde A^{-1} S + 2\gamma \widetilde A^{-1} \begin{bmatrix}B\\C\end{bmatrix} \Phi(\gamma)^{-1}   \begin{bmatrix}B^* & C^*\end{bmatrix}\widetilde A^{-1} S, \quad S=\begin{bmatrix}0 & I\\ I & 0\end{bmatrix}
\end{equation}
In the blocks used in SSF-I, this means that
\[
F=
\begin{bmatrix}
\widehat F & 0\\
\ast & I_m
\end{bmatrix}
,\quad
G=
\begin{bmatrix}
\widehat G & 0
\end{bmatrix}
,\quad
H=
\begin{bmatrix}
\widehat H\\ \ast
\end{bmatrix},
\]
where the blocks $\widehat F$, $\widehat G$, $\widehat H$ have size $n\times n$. It follows that a special right deflating subspace of this pencil is
\[
\begin{bmatrix}
0_{2n \times m}\\ I_m
\end{bmatrix},
\]
whose only eigenvalue is 1 with algebraic and geometric multiplicity $m$, while the deflating subspaces relative to the other eigenvalues are in the form
\[
 \begin{bmatrix}
  V\\
  \ast
 \end{bmatrix},
\]
where $V$ has $2n$ rows and is a deflating subspace of the reduced pencil
\begin{equation}\label{reduced-pencil}
s\begin{bmatrix} I_n & - \widehat G\\0 & \widehat F \end{bmatrix}-
\begin{bmatrix} E & 0\\-\widehat H & I_n \end{bmatrix}.
\end{equation}
Using \eqref{tedious} and the fact that $\widetilde A$ and $\Phi(\gamma)$ are symmetric, one sees that $\widehat AS$ is symmetric, too. This means that $E^*=F$ and $G=G^*$, $H=H^*$, that is, the pencil \eqref{reduced-pencil} is symplectic. 

The pencil \eqref{reduced-pencil} is given by $P^*(s\mathcal E-\mathcal A)P$, where $P$ is the projection on
\[
\left(\spa\left(\begin{bmatrix}0\\0\\I_m\end{bmatrix}\right)\right)^{\perp}=(\ker \mathcal E)^{\perp}.
\]
With this characterization, it is easy to derive the KCF of \eqref{reduced-pencil} from that of the Cayley transform of \eqref{eq:evmatpen}. We see that $\ker \mathcal E$ is the space spanned by the first column of each W2 block (as a corollary, we see that there are exactly $m=\dim\ker E$ such blocks). These blocks are transformed into blocks W1 with $\lambda=1$ by the Cayley transform. Thus projecting on their orthogonal complement corresponds to dropping the first row and column from each of the W1 blocks relative to $\lambda=1$. In particular, it follows that if the criteria in Theorem~\ref{thm:lure_solvability} hold, then every Kronecker block of \eqref{reduced-pencil} relative to an eigenvalue $\lambda$ on the unit circle has even size. Therefore, the reduced pencil \eqref{reduced-pencil} is weakly d-split. By considering which vectors are needed from each Kronecker chain to form the subspace in \eqref{eq:lagrparam}, we get therefore the following result.
\begin{theorem}
Let $\mathcal V$ be the unique $(n+m)$-dimensional c-semi-stable $\mathcal E$-neutral deflating subspace of \eqref{eq:evmatpen}. Then, there is a matrix $V_2\in\mathcal C^{n,m}$ such that
\[
\mathcal V=\spa\begin{bmatrix}V_1 & 0\\ V_2 & I_m \end{bmatrix},
\]
where $V_1$ spans the canonical d-semi-unstable subspace of the pencil \eqref{reduced-pencil}. Moreover, if $\spa(V_1)$ admits a basis in the form
\[
\begin{bmatrix}
X_+\\
I_n
\end{bmatrix},
\]
then $X_+$ is the maximal solution of the Lur'e equation \eqref{eq:lure}.
\end{theorem}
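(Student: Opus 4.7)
The plan is to follow $\mathcal{V}$ through the two transformations that convert \eqref{eq:evmatpen} into the reduced symplectic pencil \eqref{reduced-pencil}, namely the Cayley transform and the SSF-I change of coordinates of Theorem~\ref{thm:bmf}. Both preserve right deflating subspaces as concrete subspaces of $\C^{2n+m}$: the Cayley transform only relabels eigenvalues and leaves Kronecker chains intact, while the passage to SSF-I amounts to left-multiplication by an invertible matrix. Thus $\mathcal{V}$ survives unchanged. With $\gamma>0$ real, the Cayley map $\mathcal{C}(\lambda)=(\lambda-\gamma)/(\lambda+\gamma)$ sends $\C^-$ outside the unit disk, $\C^+$ inside, and $i\R\cup\{\infty\}$ onto the unit circle, so $\mathcal{V}$ becomes a d-semi-unstable deflating subspace of the SSF-I pencil \eqref{bmf-lure}.

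Next I would exploit the block structure extracted from \eqref{tedious}: the last $m$ rows and columns of \eqref{bmf-lure} contribute the trivial eigenvalue $1$ of multiplicity $m$, spanned by $\begin{bmatrix}0_{2n\times m}\\ I_m\end{bmatrix}$, while the upper $2n$ rows and columns carry the reduced pencil \eqref{reduced-pencil}. Under the hypotheses of Theorem~\ref{thm:lure_solvability}, the reduced pencil is weakly d-split with a well-defined canonical d-semi-unstable subspace of dimension $n$. Projecting $\mathcal{V}$ onto its first $2n$ coordinates therefore yields a d-semi-unstable deflating subspace of \eqref{reduced-pencil} of dimension at most $n$, and a rank--nullity count using $\dim\mathcal{V}=n+m$ forces the kernel of this projection (restricted to $\mathcal{V}$) to coincide with the entire $m$-dimensional trivial block. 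This produces the block form $\mathcal{V}=\spa\begin{bmatrix}V_1 & 0\\ V_2 & I_m\end{bmatrix}$ claimed in the statement.

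To identify the projection as the \emph{canonical} d-semi-unstable subspace, rather than some other $n$-dimensional d-semi-unstable one, I would work block by block through the EKCF selection \eqref{eq:lagrparam}: under Cayley, the c-stable half of each E1 block becomes a d-unstable chain; the first halves of the even-size E2 chains become first halves of d-critical chains; and each odd-size E3 chain at $\infty$ becomes a W1 chain at $\lambda=1$, from which the SSF-I reduction strips the leading vector, turning the rule ``first $(k_j+1)/2$ vectors'' into ``first half of an even-size d-critical chain''. This matches the canonical definition for the reduced pencil, confirming that $V_1$ spans its canonical d-semi-unstable subspace. This block-by-block bookkeeping is the main obstacle, since it is where the EKCF selection rules must be matched against the d-pencil canonical ones.

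For the final assertion, suppose $\spa(V_1)$ admits a basis of the form $\begin{bmatrix}X_+'\\ I_n\end{bmatrix}$. Projecting the Lagrangian basis \eqref{eq:lagrsol} of $\mathcal{V}$ onto the first $2n$ coordinates produces $\spa\begin{bmatrix}X_+\\ I_n\end{bmatrix}$, where $X_+$ is the maximal Lur'e solution guaranteed by Theorem~\ref{thm:maxsol}. Since the requirement that the middle $n$ rows form $I_n$ pins down a basis of this subspace uniquely, I would conclude $X_+'=X_+$, which is exactly the desired statement.
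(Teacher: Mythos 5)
Your argument is correct and follows essentially the same route as the paper: push $\mathcal V$ through the Cayley transform and the SSF-I reduction, deflate the $m$-dimensional eigenvalue-$1$ block, and match the EKCF selection rule \eqref{eq:lagrparam} chain by chain against the canonical d-semi-unstable selection for \eqref{reduced-pencil}. The one soft spot is your second paragraph's claim that a d-semi-unstable deflating subspace of \eqref{reduced-pencil} has dimension at most $n$ (false in general, since the full span of an even-length critical chain is itself d-semi-unstable); but that shortcut is redundant, because your third paragraph's bookkeeping --- in particular the fact that the leading vector of every E3 chain, i.e.\ all of $\ker\mathcal E$, is selected into $\mathcal V$ --- already forces the claimed block form, as does the explicit basis given by Theorem~\ref{thm:lagsplur}.
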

In other words, $X$ is the canonical weakly stabilizing solution of the DARE
\begin{equation}\label{newDARE}
 X=EX(I-\widehat HX)^{-1}E^*+\widehat G.
\end{equation}

\section{Implementation of SDA}\label{sec:details}
Based on the results of the previous sections, we can use the SDA-I algorithm to compute the solution to a Lur'e equation. The resulting algorithm is reported as Algorithm~\ref{SDA-l}.

As we saw in Section~\ref{sec:SDA}, the symplecticity of the pencil is preserved during the SDA iterations, and helps reducing the computational cost of the iteration. Moreover, this has the additional feature of preserving the eigenvalue symmetry of the original pencil along the iteration.

The explicit computation of (a possible choice of) $K$ and $L$ are typically not needed in the applications of the Lur'e equations. If they are needed, they can be computed using the fact that
\begin{equation}\label{eq:fullrank}
\begin{bmatrix}
A^* X + X^* A + Q & XB+C\\B^*X^*+C^* R
\end{bmatrix}
=
\begin{bmatrix}
K^*\\L^*
\end{bmatrix}
\begin{bmatrix}
K & L
\end{bmatrix}
\end{equation}
is a full rank decomposition.

\begin{algorithm}[ht]
\caption{A SDA algorithm for the maximal solution of a Lur'e equation}\label{SDA-l}
\SetKwData{AAA}{\ensuremath{A}}
\SetKwData{BBB}{\ensuremath{B}}
\SetKwData{CCC}{\ensuremath{C}}
\SetKwData{QQQ}{\ensuremath{Q}}
\SetKwData{RRR}{\ensuremath{R}}
\SetKwData{XXX}{\ensuremath{X}}
\SetKwData{KKK}{\ensuremath{K}}
\SetKwData{LLL}{\ensuremath{L}}
\SetKwData{TTT}{\ensuremath{T}}
\SetKwData{EEE}{\ensuremath{E}}
\SetKwData{FFF}{\ensuremath{F}}
\SetKwData{GGG}{\ensuremath{G}}
\SetKwData{HHH}{\ensuremath{H}}
\SetKwData{Ginf}{\ensuremath{G_\infty}}
\SetKwData{Hinf}{\ensuremath{H_\infty}}

\SetKwInOut{Input}{input}
\SetKwInOut{Output}{output}
\SetKwFunction{SDAI}{SDA-I}
\Input{\AAA, \BBB, \CCC, \QQQ, \RRR defining a Lur'e equation} 
\Output{The weakly stabilizing solution \XXX (and optionally \KKK and \LLL)}
Choose a suitable $\gamma>0$\;
Compute
\[
\TTT\longleftarrow
\begin{bmatrix}
0 & \AAA-\gamma I & \BBB\\
\AAA^*-\gamma I & \QQQ & \CCC\\
\BBB^* & \CCC^*& \RRR\\
\end{bmatrix}^{-1}
\begin{bmatrix}
0 & \AAA+\gamma I\\
\AAA^*+\gamma I & \QQQ\\
\BBB^* & \CCC^*\\
\end{bmatrix};
\]
Partition
\[
T=\begin{bmatrix}
\EEE & -\GGG\\
-\HHH & {\EEE}^*\\
\ast & \ast
\end{bmatrix}\;
\]
Use \SDAI on \EEE, $\FFF=\EEE^*$, \GGG,\HHH to compute $\Ginf$, $\Hinf$, and set $X=\Ginf$\;
\If{\KKK and \LLL are needed}{
 Compute $\Sigma$ and $V^*$ corresponding to the first $m$ singular vectors of the SVD of \eqref{eq:fullrank}\;
Set $\begin{bmatrix}K & L\end{bmatrix}\longleftarrow \Sigma^{1/2}V^*$\;
}
\end{algorithm}

The accuracy of the computed solution depends also on an appropriate choice of $\gamma$. Clearly, two goals have to be considered in the choice:
\begin{enumerate}
\item the matrix to invert in \eqref{bmf-lure} should be well-conditioned;
\item the condition number of the resulting problem, i.e, the separation between the stable and unstable subspace of the Cayley-transformed pencil \eqref{reduced-pencil}, should not be too small.
\end{enumerate}
While the impact of the first factor is easy to measure, the second one poses a more significant problem, since there are no \emph{a priori} estimates for the conditioning of a subspace separation problem. Nevertheless, we may try to understand how the choice of the parameter $\gamma$ of the Cayley transform affects the conditioning. Roughly speaking, the conditioning of the invariant subspace depends on the distance between its eigenvalues and those of the complementary subspace \cite{GVL}. The eigenvalues of the transformed pencil are given by $\frac{\lambda-\gamma}{\lambda+\gamma}=1-\frac{2\gamma}{\lambda+\gamma}$, thus they tend to cluster around 1 for small values of $\gamma$, which is undesirable. The closest to 1 is the one for which $\lambda+\gamma$ has the largest modulus; we may take as a crude approximation $\rho(A)+\gamma$, which can be further approximated loosely with $\norm{A}_1+\gamma$. Therefore, as a heuristic to choose a reasonable value of $\gamma$, we may look for a small value of
\[
f(\gamma)=\max\left(\condest\left(\begin{bmatrix}\mathcal E_1 & \mathcal A_2\end{bmatrix}\right), \frac{\norm{A}_1+\gamma}{2\gamma} \right),
\]
where $\condest(\cdot)$ is the condition number estimate given by Matlab. Following the strategy of \cite{CFL05}, we chose to make five steps of the golden section search method \cite{LuYe08} on $f(\gamma)$ in order to get a reasonably good value of the objective function without devoting too much time to this ancillary computation.

However, we point out that in \cite{CFL05}, a different $f(\gamma)$ is used, which apparently only takes into account the first of our two goals. The choice of the function is based on an error analysis of their formulas for the starting blocks of SDA. Due to Theorem~\ref{bmf}, this part of the error analysis can be simplified to checking $\condest\left(\begin{bmatrix}\mathcal E_1 & \mathcal A_2\end{bmatrix}\right)$.

\section{Numerical experiments}\label{sec:experiments}
We have implemented Algorithm~\ref{SDA-l} (SDA-L) using Matlab, and tested it on the following test problems.
\begin{description}
\item[P1] a Lur'e equation with a random stable matrix $A\in \mathbb R^{n,n}$, a random $C=B$, $Q=0$ and $R$ the $m\times m$ matrix with all the entries equal to 1, with $\rank(R)=1$. Namely, $B$ was generated with the Matlab command \lstinline{B=rand(n,m)}, while to ensure a stable $A$ a we used the following more complex sequence of commands: \lstinline{V=randn(n);} \lstinline{W=randn(n);} \lstinline{A=-V*V'-W+W';}
\item[P2] a set of problems motivated from real-world examples, taken with some modifications from the benchmark set \lstinline{carex} \cite{carex}. Namely, we took Examples 3 to 6 (the real-world applicative problems) of this paper, which are a set of real-world problems varying in size and numerical characteristics, and changed the value of $R$ to get a singular problem. In the original versions of all examples, $R$ is the identity matrix of appropriate size; we simply replaced its $(1,1)$ entry with 0, in order to get a singular problem.
\item[P3] a highly ill-conditioned high-index problem with $m=1$, $A=I_n+N_n$, $B=e_n$ (the last vector of the canonical basis for $\mathbb R^n$), $C=-B$, $R=0$, $Q=-\operatorname{tridiag}(1,2,1)$. Such a problem corresponds to a Kronecker chain of length $2n+1$ associated to an infinite eigenvalue, and its canonical semi-stable solution is $X=I$. Notice that the conditioning of the invariant subspace problem in this case is $\epsilon^{1/(2n+1)}$, for an unstructured perturbation of the input data of the order of the machine precision $\epsilon$ \cite[section 16.5]{GLR}.
\end{description}

The results of SDA-L are compared to those of a regularization method as the one described in \eqref{eq:are_pert}, for different values of the regularization parameter $\varepsilon$. After the regularization, the equations are solved using Algorithm~\ref{SDA} after a Cayley transform with the same parameter $\gamma$ (R+S), or with the matrix sign method with determinant scaling \cite{MehBook,HigBook} (R+N). We point out that the control toolbox of Matlab contains a command \lstinline{gcare} that solves a so-called generalized continuous-time algebraic Riccati equation based on a pencil in a form apparently equivalent to that in \eqref{eq:evmatpen}. In fact, this command is not designed to deal with a singular $R$, nor with eigenvalues numerically on the imaginary axis. Therefore, when applied to nearly all the following experiments, this command fails reporting the presence of eigenvalues too close to the imaginary axis.

For the problem P3, where an analytical solution $X=I$ is known, we reported the values of the forward error
\[
\frac{\norm{\tilde X-X}_F}{\norm{X}_F}.
\] 
For P1 and P2, for which no analytical solution is available, we computed instead the relative residual of the Lur'e equations in matrix form
\[
\frac{\norm{
\begin{bmatrix}
A'X+XA+Q & XB+C\\ B^* X^* + C^* & R
\end{bmatrix}
-
\begin{bmatrix}K^*\\ L^*\end{bmatrix}
\begin{bmatrix}K & L\end{bmatrix}
}_F}{\norm{\begin{bmatrix}
A'X+XA+Q & XB+C\\ B^* X^* + C^* & R
\end{bmatrix}
}_F}
\]
(see \eqref{eq:fullrank}).
A star $\star$ in the data denotes convergence failure.
\begin{figure}
\caption{Relative residual for P1}
\begin{tabular}{cc|c|ccc|c}
$n$ & $m$ & SDA-L & R+S $\varepsilon=10^{-6}$ & R+S $\varepsilon=10^{-8}$ & R+S $\varepsilon=10^{-12}$ & R+N $\varepsilon=10^{-8}$\\
\hline
10 & 3 & 1E-15 & 2E-8 & 4E-10 & 3E-6 & 3E-10\\
50 & 5 & 3E-14 & 8E-9 & 1E-7 & 2E-1 & 4E-10\\
500 & 10 & 7E-14 & 2E-9 & 1E-7 & 1E-1 & $\star$\\
\end{tabular}
\end{figure}
\begin{figure}
\caption{Relative residual for P2}
\begin{tabular}{c|c|ccc|c}
Problem \# & SDA-L & R+S $\varepsilon=10^{-6}$ & R+S $\varepsilon=10^{-8}$ & R+S $\varepsilon=10^{-12}$ & R+N $\varepsilon=10^{-8}$\\
\hline
3 & 6E-16 & 1E-7 & 9E-10 & 8E-6 & 1E-9\\
4 & 9E-16 & 6E-7 & 6E-9 & 2E-7 & 6E-9\\
5 & 6E-15 & 3E-7 & 1E-9 & 3E-8 & 1E-9\\
6 & 2E-15 & 6E-12 & 2E-12 & 1E-11 & 4E-13\\
\end{tabular}
\end{figure}

\begin{figure}
\caption{Forward error for P3}
\begin{tabular}{c|c|ccc|c}
n & SDA-L & R+S $\varepsilon=10^{-6}$ & R+S $\varepsilon=10^{-8}$ & R+S $\varepsilon=10^{-12}$ & R+N $\varepsilon=10^{-8}$\\
\hline
1 & 1E-8 & 1E-3 & 1E-4 & 1E-6 & 1E-4 \\
2 & 5E-5 & 3E-2 & 1E-2 & 3E-2 & $\star$\\
3 & 2E-3 & 1E-1 & 5E-2 & 2E+0 & $\star$\\
4 & 1E-2 & 4E-1 & 1E-1 & 8E-1 & $\star$\\
5 & 6E-2 & 1E+0 & 4E-1 & 2E+0 & $\star$
\end{tabular}
\end{figure}

We see that in all the experiments our solution method obtains a better result than the ones based on regularization. 
\section{Conclusion and open issues}\label{sec:conc}
In this work we have introduced a~new numerical method for the solution of Lur'e matrix equations. Unlike previous methods based on regularization, this approach allows one to solve the original equation without introducing any artificious perturbation.

The first step of this approach is applying a Cayley transform to convert the problem to an equivalent discrete-time pencil. In this new form, the infinite eigenvalues can be easily deflated, reducing the problem to a discrete-time algebraic Riccati equation with eigenvalues on the unit circle. For the solution of this latter equation, the structured-preserving doubling algorithm was chosen, due to its good behaviour in presence of eigenvalues on the unit circle, as proved by the convergence results in \cite{HuaLin09}. Direct methods, such as the symplectic eigensolvers presented in \cite{FasBook}, could also be used for the solution of the deflated DARE.

The numerical experiments confirm the effectiveness of our new approach for regular matrix pencils. It is not clear whether the same method can be adapted to work in cases in which the pencil \eqref{eq:evmatpen} is singular, which may indeed happen in the contest of Lur'e equations. Another issue is finding a method to exploit the low-rank structure of $Q$ (when present). These further developments are currently under our investigation.

\bigskip\bigskip
\bibliographystyle{siam}
\bibliography{lure}
\end{document}